\newtheorem{thm}{Theorem}
\newtheorem*{thm13}{Theorem~1.3}
\newtheorem{lemma}[thm]{Lemma}
\theoremstyle{remark}
\DeclareMathOperator{\supp}{supp}
\DeclareMathOperator{\Lip}{Lip}
\DeclareMathOperator{\diam}{diam}
\DeclareMathOperator{\mass}{mass}
\DeclareMathOperator{\FV}{FV}
\newcommand{\co}{\colon}
\newcommand{\dimAN}{\dim_{\text{AN}}}
\newcommand{\CLip}{C^\text{Lip}}
\title[Erratum: ``Lipschitz connectivity and filling
invariants'']{Erratum: ``Lipschitz connectivity and filling
  invariants in solvable groups and buildings''}
\author{Robert Young}
\date{\today}
\begin{document}
\maketitle
\begin{abstract}
  This note corrects some omissions in section 2 of the paper
  ``Lipschitz connectivity and filling invariants in solvable groups
  and buildings.''
\end{abstract}
\emph{In the course of writing this paper, I changed the statement of
  Theorem 1.3, but did not change the proof.  This erratum presents
  the omitted proofs.  Thanks to Moritz Gruber for noticing the
  omission.}

\emph{This erratum replaces Lemmas 2.6--2.8 and the proof of Theorem 1.3.}

Recall the original theorem:
\begin{thm13}[\cite{HigherSol}]
  Suppose that $Z\subset X$ is a nonempty closed subset with metric
  given by the restriction of the metric of $X$.  Suppose that $X$ is
  a geodesic metric space such that the Assouad-Nagata dimension
  $\dimAN(X)$ of $X$ is finite.  Suppose that one of the following is true:
  \begin{itemize}
  \item $Z$ is Lipschitz $n$--connected.
  \item $X$ is Lipschitz $n$--connected, and if $X_p,p\in P$ are the connected components of
    $X\smallsetminus Z$, then the sets $H_p=\partial X_p$ are
    Lipschitz $n$--connected with uniformly bounded
    implicit constant.
  \end{itemize}
  Then $Z$ is undistorted up to dimension $n+1$.  
\end{thm13}

First, we note that the second condition implies the first condition:
\begin{lemma}
  Suppose that $X$ is Lipschitz $n$--connected and that $Z$ is a
  closed subset of $X$.  Let $X_p,p\in P$ be the connected components
  of $X\smallsetminus Z$ and suppose that the sets $\partial X_p$
  are Lipschitz $n$--connected with uniformly bounded implicit
  constant.  Then $Z$ is Lipschitz $n$--connected.
\end{lemma}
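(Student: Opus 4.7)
My plan is to first use Lipschitz $n$--connectivity of $X$ to produce a filling in $X$, then surgically replace each excursion of the filling into $X \smallsetminus Z$ with one supplied by the appropriate $\partial X_p$.

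Let $f \colon S^k \to Z$ be $L$--Lipschitz with $0 \le k \le n$. By Lipschitz $n$--connectivity of $X$, extend $f$ to $F \colon D^{k+1} \to X$ with $\Lip(F) \lesssim L$. The open set $W := F^{-1}(X \smallsetminus Z) \subset D^{k+1}$ is disjoint from $S^k$, and each connected component $U$ of $W$ satisfies $F(U) \subset X_{p(U)}$ for a unique $p(U) \in P$; the topological boundary $\partial U$, taken in $D^{k+1}$, therefore satisfies $F(\partial U) \subset \overline{X_{p(U)}} \smallsetminus X_{p(U)} = \partial X_{p(U)} \subset Z$.

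The main technical step, and what I expect to be the main obstacle, is to prove the following auxiliary extension fact: given a Lipschitz $n$--connected space $Y$, an open set $U \subset D^{k+1}$, and an $L$--Lipschitz map $\phi \colon \partial U \to Y$, there is a Lipschitz extension $\Phi \colon \overline{U} \to Y$ of $\phi$ with $\Lip(\Phi) \lesssim L$, where the implicit constant depends only on $k$ and the Lipschitz $n$--connectivity constant of $Y$. I would prove this by taking a Whitney decomposition of $U$ by cubes $\{Q_j\}$ with $\diam Q_j \asymp d(Q_j, \partial U)$, choosing a nearest boundary point $y_v \in \partial U$ for each vertex $v$ of the resulting cubical complex, setting $\Phi(v) := \phi(y_v)$, and then extending $\Phi$ inductively over the $m$--skeleta ($0 \le m \le k$) of each cube $Q_j$: adjacent vertex-values differ by $O(L \diam Q_j)$, so Lipschitz $m$--connectivity of $Y$ produces fillings whose local Lipschitz constants are $\lesssim L$. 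The Whitney geometry guarantees that this assembles to a globally $\lesssim L$--Lipschitz map, and continuity across $\partial U$ follows because both the Whitney cubes near $x$ and their associated boundary points $y_v$ collapse to $x$ as $x \in U$ approaches $\partial U$.

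Applying this auxiliary result to each component $U$ with $Y = \partial X_{p(U)}$ yields, using the uniform bound on the Lipschitz $n$--connectivity constants of the $\partial X_p$, a map $G_U \colon \overline{U} \to \partial X_{p(U)}$ with $\Lip(G_U) \lesssim L$ uniformly in $U$, agreeing with $F$ on $\partial U$. Define $G$ to equal $F$ on $F^{-1}(Z)$ and $G_U$ on each $\overline{U}$; the two definitions match on the interfaces $\partial U$, so $G \colon D^{k+1} \to Z$ is continuous with $G|_{S^k} = f$. A segment-decomposition argument using the local Lipschitz estimates on each region gives $\Lip(G) \lesssim L$, proving that $Z$ is Lipschitz $n$--connected.
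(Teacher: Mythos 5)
Your proof is correct and follows essentially the same route as the paper: extend $f$ into $X$, observe that the preimage boundary of each excursion into a component $X_p$ lands in $\partial X_p$, and use Lipschitz $n$--connectivity of $\partial X_p$ (with the uniform constant) to re-extend over that region, then glue. The only difference is that the auxiliary Whitney-decomposition extension fact you propose to prove by hand is precisely the Lipschitz extension theorem the paper simply cites (Almgren; Johnson--Lindenstrauss--Schechtman; Lang--Schlichenmaier, Thm.~1.4), so you could invoke it directly rather than reprove it.
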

\begin{proof}
  Suppose that $f\co S^n\to Z$ is a Lipschitz map.  We claim that
  there is a Lipschitz map $h\co D^{n+1}\to Z$ that extends $f$ and
  such that $\Lip h\lesssim \Lip f$.  By the Lipschitz connectivity of
  $X$, there is a Lipschitz extension $g\co D^{n+1} \to X$ such that
  $\Lip g\lesssim \Lip f$; if the image of $g$ lies in $Z$, we're
  done.  Otherwise, suppose that $X_p$ is a connected component of
  $X\setminus Z$ and let $K_p=g^{-1}(X_p)$.  Then $g$ sends $\partial
  K_p \to \partial X_p$.  The set $\partial X_p$ is Lipschitz
  $n$-connected, so any Lipschitz map from a closed subset of
  $D^{n+1}$ to $\partial X_p$ can be extended to a Lipschitz map on
  all of $D^{n+1}$ (see \cite[Thm.\ 1.2]{AlmgrenIntegral}, \cite[Thm.\
  2]{JohnsonLindenstraussSchechtman}, \cite[Thm.\ 1.4]{LangSch}).  We
  therefore construct a map $h_p\co K_p\to \partial X_p$ such that
  $h_p$ agrees with $g$ on $\partial K_p$ and $\Lip h_p\lesssim \Lip
  g$.  Then
  $$h(x)=\begin{cases}
    g(x) & \text{ if }g(x)\in Z\\
    h_p(x) & \text{ if }g(x)\in X_p\\
  \end{cases}$$ is an extension of $f$, and $\Lip h\lesssim \Lip f$.
\end{proof}

It thus suffices to prove the theorem in the case that $Z$ is
Lipschitz $n$--connected.  We recall some notation from
Section~2 of \cite{HigherSol}; in the following, $\epsilon>0$ is a
small number that will depend on the cycle to be filled.
\begin{itemize}
\item We cover $X$ by a collection $\mathcal{D}=\mathcal{D}(\epsilon)$
  of open sets $D_k, k\in K$ such that $\diam D_k\gtrsim \epsilon$ for
  all $k$.  This cover can be broken into a ``fine'' portion
  consisting of a cover of $Z$ by $\epsilon$--balls and a ``coarse''
  portion consisting of subsets of $X\setminus Z$ whose diameters are
  roughly proportional to their distance from $Z$.  For each
  $D_k\in \mathcal{D}$, we define a 1-Lipschitz function
  $\tau_k\co X\to \mathcal{R}$ such that $\tau_k\ge \epsilon$ on $D_k$,
  $\diam \supp \tau_k\sim \diam D_k$, and $\supp \tau_k$ intersects
  $Z$ if and only if $D_k$ intersects $Z$.
\item $\Sigma=\Sigma(\epsilon)$ is a QC complex based on the nerve of
  the cover of $X$ by the sets $\supp \tau_k$.  We denote the vertices
  of $\Sigma$ by $v_k$.  Then $\dim \Sigma\le 2 \dimAN {X}+1$, and for
  any $k\in K$, the diameter of any simplex of $\Sigma$ containing
  $v_k$ is comparable to $\diam \supp \tau_k$.
\item $g\co X\to \Sigma$ is a map with
  Lipschitz constant independent of $\epsilon$.  The map $g$ is
  defined by normalizing the $\tau_k$'s to obtain a partition of unity
  $g_k(x)=\tau_k(x)/\bar{\tau}(x)$ where $\bar{\tau}(x)=\sum_i
  \tau_i(x)$, then using the $g_k$'s as coordinate functions.  
  In the proof of Lemma~2.5 in \cite{HigherSol}, we showed that for
  each $k$, we have
  $$\Lip(g_k)\sim \diam(\supp \tau_k)^{-1}.$$
  Since $\diam(\supp \tau_k) \gtrsim \epsilon$, we have
  $\Lip(g_k)\lesssim \epsilon^{-1}$ for all $k$.
\end{itemize}

We can use the connectivity of $Z$ to construct a map
$h\co\Sigma^{(n+1)}\to Z$ as in the proof of Theorem 1.4 of
\cite{LangSch}.
\begin{lemma}
  There is a Lipschitz extension $h\co \Sigma^{(n+1)}\to Z$ with
  Lipschitz constant independent of $\epsilon$ such that
  $d(h(g(z)),z)\lesssim \epsilon$ for every $z\in Z$.
\end{lemma}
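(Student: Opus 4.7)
The plan is to build $h$ skeleton by skeleton, in the style of the Lipschitz extension scheme in the proof of \cite[Thm.~1.4]{LangSch}, using the Lipschitz $n$--connectedness of $Z$ at each inductive step and exploiting the uniform bound on $\dim\Sigma$ to keep the total Lipschitz constant under control.

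First I would define $h$ on the vertex set of $\Sigma$. For each vertex $v_k$, I would choose $h(v_k)\in Z$ close to $\supp \tau_k$: if $\supp \tau_k$ meets $Z$, pick any $h(v_k)\in \supp\tau_k\cap Z$; otherwise pick a point of $Z$ realizing, up to a bounded factor, the distance from $\supp\tau_k$ to $Z$. I would then verify that whenever $v_k$ and $v_{k'}$ are joined by an edge in $\Sigma$, their supports $\supp\tau_k$ and $\supp\tau_{k'}$ intersect and have comparable diameters; combined with the property that a ``coarse'' support has distance to $Z$ comparable to its diameter, this gives $d(h(v_k),h(v_{k'}))\lesssim \diam \supp\tau_k$, which is comparable to the diameter of the edge in $\Sigma$.

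Having $h$ defined on $\Sigma^{(0)}$ with bounded Lipschitz ratio along edges, I would extend inductively over simplices of dimension $i+1\le n+1$: on such a simplex $\sigma$ the map $h|_{\partial\sigma}\co \partial\sigma\to Z$ has uniformly bounded Lipschitz constant, so by Lipschitz $n$--connectedness of $Z$ it extends over $\sigma$ at a fixed multiplicative cost, and since $\dim\Sigma \le 2\dimAN(X)+1$ is bounded, the resulting $\Lip h$ is uniform in $\epsilon$. Finally I would check the approximation property: for $z\in Z$, every vertex $v_k$ of the simplex of $\Sigma$ carrying $g(z)$ satisfies $z\in \supp\tau_k$, so $\supp\tau_k$ belongs to the ``fine'' part of the cover and has diameter $\sim\epsilon$; hence $d(h(v_k),z)\lesssim \epsilon$, and the uniform Lipschitz bound on $h$ turns this into $d(h(g(z)),z)\lesssim \epsilon$.

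The step I expect to be the main obstacle is the ``transitional'' edge case, where a fine vertex $v_k$ with $\supp\tau_k\cap Z\neq\emptyset$ is joined to a coarse vertex $v_{k'}$ with $\supp\tau_{k'}\cap Z=\emptyset$, and one must verify that the choice of $h(v_{k'})$ does not sit much farther from $h(v_k)$ than the edge diameter allows. This reduces to the nerve/QC facts that adjacent supports have comparable diameters and that the distance from a coarse support to $Z$ is controlled by its diameter, after which the remainder of the argument is essentially routine bookkeeping.
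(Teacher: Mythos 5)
Your proposal is correct and follows essentially the same route as the paper's proof: choose $h(v_k)$ in $Z$ near $\supp\tau_k$ (in $\supp\tau_k\cap Z$ when possible, otherwise nearly realizing $d(Z,\supp\tau_k)$), check the Lipschitz bound on edges using that adjacent supports intersect and have diameters comparable to the edge length, extend over $\Sigma^{(n+1)}$ via Lipschitz $n$--connectedness and the bounded dimension of the QC complex, and verify $d(h(g(z)),z)\lesssim\epsilon$ using that the supports containing $z\in Z$ come from the fine part of the cover. The minor differences (choosing $h(v_k)$ in $\supp\tau_k\cap Z$ rather than $D_k\cap Z$, and running the approximation argument through the simplex carrying $g(z)$ rather than the ball $D_k$ containing $z$) are immaterial.
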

\begin{proof}
  For each vertex $v_k$ of $\Sigma$, if $D_k$ intersects $Z$, we
  choose $h(v_k)\in Z\cap D_k$.  Otherwise, we let $h(v_k)\in Z$ be
  such that $d(h(v_k),\supp \tau_k)\le 2 d(Z,\supp \tau_k)$.  If two
  vertices $v_k$ and $v_{k'}$ are connected by an edge $e$, then
  $$\ell(e)\lesssim \diam(\supp \tau_k)+\diam(\supp \tau_{k'}),$$
  and $\supp \tau_k$ intersects $\supp \tau_{k'}$.  Thus
  \begin{align*}
    d(h(v_k),h(v_{k'})&\lesssim d(Z,\supp \tau_k)+\diam(\supp
                        \tau_k)+\diam(\supp \tau_{k'})+d(Z,\supp
                        \tau_{k'})\\ 
                      &\lesssim \ell(e),
  \end{align*}
  so $h$ is Lipschitz on $\Sigma^{(0)}$.  Since $Z$ is Lipschitz
  $n$--connected and $\Sigma$ is a QC complex, we can extend $h$
  to a Lipschitz map on $\Sigma^{(n+1)}$.  Finally, if $z\in Z$, then
  $z\in D_k$ for some $k\in K$, and $g(z)$ is in the star of $v_k$.
  It follows that $d(g(z),v_k)\lesssim \epsilon$, and so 
  $$d(h(g(z)),z)\le \Lip(h) d(g(z),v_k) + d(h(v_k),z)\lesssim \epsilon
  + \diam D_k\lesssim \epsilon$$
  as desired.
\end{proof}

Suppose that $\alpha\in \CLip_m(Z)$ is a $m$--cycle and $m\le n$. In
\cite{HigherSol}, we tried to construct a filling of $\alpha$ in $Z$
by constructing a filling in $X$, sending that filling to $\Sigma$,
then approximating it in $\Sigma^{(n+1)}$ and sending it back to $Z$.
That is, we first use the Lipschitz connectivity of $X$ to construct a
chain $\beta$ in $X$ whose boundary is $\alpha$.  Its push-forward
$g_\sharp(\beta)$ can be approximated by a simplicial chain
$P^0_\beta$ so that $\partial P^0_\beta$ is a simplicial approximation
of $g_\sharp(\alpha)$.  Since $\supp P^0_\beta\subset \Sigma^{(n+1)}$,
the $(m+1)$--chain $h_\sharp(P^0_\beta)$ is a chain in $Z$ and its
boundary is $\epsilon$--close to $\alpha$.  It remains to construct an
annulus between $h_\sharp(\partial P^0_\beta)$ and $\alpha$.  In
\cite{HigherSol}, there were some errors in this construction, and we
will correct those issues here.

Theorem~1.3 will follow from the following lemma:
\begin{lemma}\label{lem:alphaGammaLambda}
  There is a $c_\alpha>0$, depending on the number of simplices in
  $\alpha$ and their Lipschitz constants such that for any
  $\epsilon>0$, there are an $m$--cycle
  $\alpha'=\alpha'(\epsilon)\in C_m(\Sigma(\epsilon))$ and two annuli,
  $\gamma\in \CLip_{m+1}(\Sigma(\epsilon))$ and
  $\lambda\in \CLip_{m+1}(Z)$, such that
  \begin{align*}
    \partial \gamma&=g_\sharp(\alpha)-\alpha'&   \mass \gamma &\lesssim c_\alpha \epsilon\\
    \partial \lambda&=\alpha-h_\sharp(\alpha')&   \mass \lambda &\lesssim c_\alpha \epsilon.
  \end{align*} 
\end{lemma}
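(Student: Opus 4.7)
The plan is to realize Lemma~\ref{lem:alphaGammaLambda} via a simplicial approximation of $g_\sharp(\alpha)$ in $\Sigma$ at scale $\epsilon$. Write $\alpha = \sum_i a_i \sigma_i$ with Lipschitz simplices $\sigma_i\co \Delta^m \to Z$; the constant $c_\alpha$ will absorb the number of $\sigma_i$, their Lipschitz constants, and $\Lip(g)$. I would construct $\alpha'$ as a simplicial approximation of $g_\sharp(\alpha)$ supported on $\Sigma^{(m)}$, then take $\gamma$ to be a ``prism'' chain in $\Sigma$ between $g_\sharp(\alpha)$ and $\alpha'$, and finally assemble $\lambda$ from $h_\sharp(\gamma)$ together with a small annulus in $Z$ bridging $\alpha$ and $h_\sharp g_\sharp(\alpha)$, built using Lipschitz $n$-connectivity of $Z$ and the estimate $d(h(g(z)),z)\lesssim \epsilon$ from the previous lemma.

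For $\alpha'$ and $\gamma$: subdivide each domain $\Delta^m$ into sub-simplices of diameter $\ll \epsilon / \Lip(g\circ\sigma_i)$, so that the image of each sub-simplex under $g\circ\sigma_i$ has diameter $\lesssim \epsilon$. For each subdivision vertex $p$, the point $g(\sigma_i(p))$ lies in a closed simplex $[v_{k_0},\dots,v_{k_d}]$ of $\Sigma$; I would snap $p$ to one of these vertices using a fixed global rule (for instance, choosing the $k_j$ maximizing $\tau_{k_j}(\sigma_i(p))$, with ties broken by a well-ordering of $K$). Because nearby points of $\Sigma$ lie in simplices whose vertex sets share indices, a subdivision fine enough relative to $\diam\supp\tau_k\gtrsim \epsilon$ guarantees that the snapped vertices of each sub-simplex span a simplex of $\Sigma$, so linear extension yields a simplicial chain $\alpha'$. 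The corresponding prism on each sub-simplex sits inside a single simplex of $\Sigma$ of diameter $\lesssim \epsilon$, so has mass $\lesssim \epsilon^{m+1}$; summing over the $\lesssim c_\alpha \epsilon^{-m}$ sub-simplices gives $\mass \gamma \lesssim c_\alpha \epsilon$, and cancellation on shared faces gives $\partial \gamma = g_\sharp(\alpha) - \alpha'$.

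For $\lambda$: since $\supp \alpha' \subset \Sigma^{(m)} \subset \Sigma^{(n+1)}$, the push-forward $h_\sharp(\gamma) \in \CLip_{m+1}(Z)$ has mass $\lesssim c_\alpha \epsilon$ and boundary $h_\sharp g_\sharp(\alpha) - h_\sharp(\alpha')$. It then suffices to produce an annulus $\mu \in \CLip_{m+1}(Z)$ with $\partial \mu = \alpha - h_\sharp g_\sharp(\alpha)$ and $\mass \mu \lesssim c_\alpha \epsilon$, and set $\lambda = h_\sharp(\gamma) + \mu$. For each $\sigma_i$ the maps $\sigma_i$ and $h\circ g\circ \sigma_i\co \Delta^m\to Z$ are pointwise $O(\epsilon)$-close by the previous lemma and uniformly Lipschitz; subdividing $\Delta^m\times [0,1]$ at scale $\epsilon$ and extending inductively over its skeleta using the Lipschitz $n$-connectivity of $Z$ (applicable since $m+1\le n+1$) produces a Lipschitz annulus of mass $\lesssim \epsilon$ per original simplex, which sums to the required bound.

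I expect the main obstacle to be the simplicial-approximation step, specifically arranging the vertex-snapping rule so that it is simultaneously (i) independent of which sub-simplex one is working in, so that contributions cancel along shared faces, and (ii) guaranteed to send the vertices of each sub-simplex to the vertex set of a common simplex of $\Sigma$. This forces a global selection function on points of $\Sigma$ together with a quantitative estimate on how fine the subdivision must be relative to $\diam\supp\tau_k\gtrsim \epsilon$ and $\Lip(g)\lesssim 1$. The prism estimate and the Lipschitz-connectivity filling for $\mu$ are then routine.
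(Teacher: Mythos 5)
Your construction of $\alpha'$ and $\gamma$ is essentially the paper's: subdivide at scale $\sim\epsilon$, snap subdivision vertices by a global selection rule (the paper uses the index $k$ maximizing $g_k(z)=\tau_k(z)/\bar\tau(z)$, equivalent to maximizing $\tau_k(z)$), and take a straight-line prism in $\Sigma$. You correctly flag, but do not resolve, the quantitative heart of the snapping step. The paper's argument is not that ``nearby points of $\Sigma$ lie in adjacent simplices'' (true but insufficient --- $v(z)$ and $v(z')$ could be vertices of two adjacent simplices without spanning a common one); rather it uses the partition-of-unity bound $g_{k(z)}(z)\ge 1/(\dim\Sigma+1)$ together with $\Lip(g_k)\le L_g$ to fix $\delta=\bigl(2(\dim\Sigma+1)L_g\bigr)^{-1}\sim\epsilon$, so that $g_{k(z_{i,j})}(z)>0$ for all $z\in\Delta_i$ and all $j$ simultaneously; this forces every snapped vertex $v(z_{i,j})$ to lie in $\supp g(z)$, hence all in one simplex. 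Your filling in this step would need exactly this estimate.

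The genuine gap is in $\lambda$. You set $\lambda=h_\sharp(\gamma)+\mu$, but $h_\sharp(\gamma)$ is not defined: $h$ is a map $\Sigma^{(n+1)}\to Z$, and the prisms making up $\gamma$ lie inside simplices of $\Sigma$ that contain $g(\Delta_i(x))$ in their interior. Such a simplex can have dimension as large as $\dim\Sigma\le 2\dimAN(X)+1$, which may far exceed $n+1$, so $\supp\gamma\not\subset\Sigma^{(n+1)}$. (Noting that $\supp\alpha'\subset\Sigma^{(m)}$ does not help; $\gamma$ interpolates to $g_\sharp(\alpha)$, which lives in high-dimensional open simplices.) For the same reason $h_\sharp g_\sharp(\alpha)$ is not a well-defined chain, so the bridging annulus $\mu$ has no well-posed boundary condition. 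Simplicially approximating $\gamma$ first does not repair this, since that would also change $\partial\gamma$ on the non-simplicial part $g_\sharp(\alpha)$.

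The paper sidesteps this entirely by building $\lambda$ directly in $Z$: for each subdivision simplex $\Delta_i$, the two maps $\Delta_i$ and $h\circ\Delta'_i$ (the latter well-defined because $\Delta'_i$ has image in $\Sigma^{(m)}\subset\Sigma^{(n+1)}$) are $O(\epsilon)$-Lipschitz and pointwise $O(\epsilon)$-close, so Lipschitz $n$-connectivity of $Z$ gives a prism $p_i\co\Delta^m\times[0,1]\to Z$ with $\Lip p_i\lesssim\epsilon$; taking $\lambda=\sum_i[p_i]$ yields $\partial\lambda=\alpha-h_\sharp(\alpha')$ and $\mass\lambda\lesssim N\epsilon^{m+1}\lesssim c_\alpha\epsilon$. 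Replacing your $h_\sharp(\gamma)+\mu$ route with this direct construction would close the gap.
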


\begin{proof}[Proof of Theorem~1.3]
  Given $\gamma$ and $\lambda$, we construct a filling of $\alpha$ by letting
  $P_\beta\in C_{m+1}(\Sigma(\epsilon))$ be a simplicial approximation of
  $g_\sharp(\beta)-\gamma$.  Since $\partial(g_\sharp(\beta)-\gamma)$ is
  already simplicial, we have 
  $$\partial P_\beta=\partial(g_\sharp(\beta)-\gamma)=\alpha'.$$
  Then 
  $$\partial(\lambda+h_\sharp(P_\beta))=\alpha,$$ 
  and
  $$\mass(\lambda+h_\sharp(P_\beta))\lesssim \mass \beta + c_\alpha
  \epsilon.$$
  Letting $\epsilon$ go to $0$, we find that $\FV_Z(\alpha)\lesssim
  \FV_X(\alpha)$ as desired.
\end{proof}

It thus suffices to construct $\alpha'$, $\gamma$, and $\lambda$ as
above.

\begin{proof}[{Proof of Lemma~\ref{lem:alphaGammaLambda}}]
The cycle $\alpha'$ will be based on a subdivision of $\alpha$.  For
any $\delta\in (0,1)$, a Euclidean $m$--simplex can be subdivided into
roughly $\delta^{-m}$ simplices of diameter less than $\delta$, so
there is a $c_\alpha>0$ depending on the number of simplices in
$\alpha$ and their Lipschitz constants such that for any
$\delta\in (0,1)$, we can subdivide $\alpha$ into a sum
$\sum_{i=1}^N \Delta_i$ of simplices where $N\le c_\alpha \delta^{-m}$
and
$$\diam \Delta_i \le \Lip \Delta_i <\delta.$$

Let $L_g=\sup_k \Lip(g_k)\sim \epsilon^{-1}$ and let
$$\delta= \frac{1}{2 (\dim(\Sigma) + 1) L_g}\sim \epsilon.$$  Then
$\alpha=\sum_{i=1}^N \Delta_i$, where
$N\lesssim c_\alpha \epsilon^{-m}$.  

We will construct the simplicial cycle $\alpha'$ by sending each
vertex of each $\Delta_i$ to the nearest vertex of $\Sigma$.  For each
point $z\in Z$, let $k(z)\in K$ be an index that maximizes $g_k(z)$
and let $v(z)=v_{k(z)}$.  We claim that if
$z_{i,0},\dots, z_{i,m}\in Z$ are the vertices of $\Delta_i$, then
$v(z_{i,0}),\dots, v(z_{i,m})$ are the vertices of a simplex of
$\Sigma$ (possibly with duplicates).  Since the $g_k$ form a partition
of unity with bounded multiplicity, we know that
$$g_{k(z_{i,j})}(z_{i,j})\ge \frac{1}{\dim(\Sigma)+1}.$$
If $z\in \Delta_i$, then
\begin{equation}\label{eq:suppTau}
  g_{k(z_{i,j})}(z)\ge \frac{1}{\dim(\Sigma)+1} - L_g
  d(z_{i,j},z)>0,
\end{equation}
so $g_{k(z_{i,j})}(z)>0$ for all $j$, and
$\{v(z_{i,0}),\dots, v(z_{i,m})\}$ is the vertex set of a simplex in
$\Sigma$.  We define $\alpha'$ to be the simplicial cycle
$$\alpha'=\sum_i \langle v(z_{i,0}),\dots, v(z_{i,m})\rangle.$$
This is a sum of at most $N$ simplices, each with diameter on the
order of $\epsilon$, so 
$$\mass \alpha'\lesssim N\epsilon^{m}\lesssim c_\alpha$$

Next, we construct $\gamma$ and $\lambda$.  We construct $\gamma$ from a straight-line
homotopy between $g_\sharp(\alpha)$ and $\alpha'$.  Consider $\Sigma$ as a
subset of the infinite simplex $\Delta^K\subset \ell^2(K)$ with vertex set
$\{v_k\}_{k\in K}$.  Let $\Delta^m$ be
the standard $m$-simplex $\Delta^m=\langle e_0,\dots, e_m\rangle$.
We view $\Delta_i$ as a map $\Delta_i\co \Delta^m\to Z$.  Likewise, we
write $\alpha'=\sum_i \Delta'_i$, where
$\Delta'_i\co \Delta^m\to \Sigma$ is the linear map such that
$\Delta'_i(e_j)=v(z_{i,j})$.

Let $x\in \Delta^m$ and let $z=\Delta_i(x)$.  We claim that $g(z)$ and
$\Delta'_i(x)$ are both contained in the same simplex of $\Sigma$.
For $s\in \Sigma$, let $\supp s$ be the vertex set of the minimal
simplex containing $s$; then, by the definition of $g$,
$$\supp g(z) = \{v_k\mid g_k(z)>0\}$$ 
and
$$\supp \Delta'_i(x) = \{v_{k(z_{i,0})},\dots, v_{k(z_{i,m})} \}.$$ 
By \eqref{eq:suppTau}, we have $g_{k(z_{i,j})}(z)>0$ for all
$j$, so $\supp \Delta'_i(x)\subset \supp g(z)$.
Consequently, we can define a map
$\bar{\Delta}_i\co \Delta^m\times [0,1]\to \Sigma$ by
$$\bar{\Delta}_i(x,t)=t g(\Delta_i(x))+(1-t) \Delta'_i(x).$$

Let $\gamma=\sum_i [\bar{\Delta}_i]$, where $[\bar{\Delta}_i]$ is the
image of the fundamental class of $\Delta^m\times [0,1]$.  Then
$\partial \gamma=g_\sharp(\alpha)-\alpha'$ as desired.  Furthermore,
since $\Lip \Delta_i\lesssim \epsilon$ and
$\Lip \Delta'_i\lesssim \epsilon$, we have
$\Lip \bar{\Delta}_i\lesssim \epsilon$, and
$$\mass \gamma \lesssim N\epsilon^{m+1}\lesssim c_\alpha \epsilon.$$

To construct $\lambda$, we use the Lipschitz connectivity of $Z$.  For
each $i$, we have $d(\Delta_i, h\circ \Delta'_i)\lesssim \epsilon$,
$\Lip(\Delta_i)\lesssim \epsilon$, and
$\Lip(h\circ \Delta'_i)\lesssim \epsilon$, so we can use the Lipschitz
connectivity of $Z$ to construct prisms $p_i\co \Delta^m\times [0,1]\to
Z$ such that $p_i|_{\Delta^m\times 0}=\Delta_i$, $p_i|_{\Delta^m\times
  1}=h\circ \Delta'_i$, and $\Lip(p_i)\lesssim \epsilon$.  Let
$\lambda=\sum_i [p_i]$.  If we are
careful to match corresponding faces in neighboring simplices, then
$$\partial \lambda=\alpha-h_\sharp(\alpha')$$
and
$$\mass \lambda\lesssim N\epsilon^{m+1}\lesssim c_\alpha \epsilon$$
as desired. 
\end{proof}
\bibliographystyle{plain}
\bibliography{higherSol}
\end{document}